\newtheorem{theorem}{Theorem}[section]
\newtheorem{lemma}{Lemma}[section]
\newtheorem{remark}{Remark}[section]
\newtheorem{assumption}{Assumption}[section]
\newenvironment{proof}[1][Proof]{\textbf{#1.} }{\ \rule{0.5em}{0.5em} \vspace{1ex}}
\def\real{\mathbb{R}}
\newcommand{\ignore}[1]{{}}
\begin{document}

\title{Complexity iteration analysis for strongly convex multi-objective optimization using a Newton path-following procedure}
\author{
E. Bergou \thanks{King  Abdullah  University  of  Science  and  Technology  (KAUST),  Thuwal,  Saudi  Arabia. MaIAGE, INRAE, Universit\'e Paris-Saclay, 78350 Jouy-en-Josas, France
 ({\tt elhoucine.bergou@inra.fr}). This author received support from the AgreenSkills+ fellowship programme which has received funding from the EU's Seventh Framework Programme under grant agreement No FP7-609398 (AgreenSkills+ contract).
}
\and
Y. Diouane\thanks{Institut Sup\'erieur de l'A\'eronautique et de l'Espace (ISAE-SUPAERO), Universit\'e de Toulouse, 31055 Toulouse Cedex 4, France
 ({\tt youssef.diouane@isae.fr}).
}
\and
V. Kungurtsev \thanks{Department of Computer Science, Faculty of Electrical Engineering, Czech Technical University in Prague. ({\tt vyacheslav.kungurtsev@fel.cvut.cz}).
Research supported by the OP VVV project
CZ.02.1.01/0.0/0.0/16\_019/0000765 Research Center for Informatics.
}
}
\maketitle
\footnotesep=0.4cm
{\small
\begin{abstract}
In this note we consider the iteration complexity of solving strongly convex multi objective optimization.
We discuss the precise meaning of this problem, and indicate it is loosely defined, but the most natural
notion is to find a set of Pareto optimal points across a grid of scalarized problems. We derive that
in most cases, performing sensitivity based path-following after obtaining one solution
is the optimal strategy for this task in terms of iteration complexity.
\end{abstract}

\bigskip

\begin{center}
\textbf{Keywords:}
Multi-objective optimization; stongly convex optimization; path-following; Newton method; Complexity iteration analysis.
\end{center}
}

\section{Introduction}
Consider the following multi-objective optimization problem:
\begin{equation}\label{eq:moo}
\min_{x \in \mathbb{R}^n} f(x),
\end{equation}
 where $f: \mathbb{R}^n \rightarrow \mathbb{R}^m$ is a  strongly convex and twice continuously differentiable function. Our target is to find \emph{weak Pareto-optimality} points for the problem (\ref{eq:moo}), recalling that \emph{weak Pareto-optimality} holds at point $\tilde x$ if for all $d \in \mathbb{R}^n$, there exists an $i \in \{1,\ldots,m\}$ such that
\[
\nabla f_i(\tilde x)^{\top} d \ge 0.
\]

For single objective optimization, \emph{worst case iteration complexity} quantifies the number of iterations
that could be necessary, in the worst case (i.e., for the most ill-behaved problem), for an algorithm to achieve
a certain level of satisfaction of an approximate measure of optimality, typically a small norm for the
gradient~\cite{nesterov2013introductory}. Classically, the multi objective optimization community had
not considered attempting to derive bounds on iteration complexity for problems in vector optimization.
In~\cite{fliege2018complexity,calderon2018complexity} the complexity of gradient descent for multi-objective optimization was considered. 
Rates were derived for obtaining some point satisfying weak Pareto-optimality.
However, in deriving the complexity result, convergence of the algorithm to some Pareto optimal point is assumed.
More fundamentally, though, consider the so-called scalarized problem, parametrized by $\{\lambda_i\}_{i=1,..,m}$ 
\begin{equation}\label{eq:scalarmoo}
\min_{x\in\mathbb{R}^n} \sum_{i=1}^m \lambda_i f_i(x),
\end{equation}
for any $\{\lambda_i\}_{i=1,..,m}$ satisfying $0<\lambda_i<1,\,\sum_i \lambda_i=1$. A stationary point of this problem is also Pareto optimal for~\eqref{eq:moo}. Thus, one can
find a Pareto optimal point, at least for strongly convex multi-objective problems, by simply choosing any 
arbitrary convex combination $\{\lambda_i\}_{i=1,..,m}$ and solving the resulting mono-objective problem, 
thus the worst case iteration complexity of 
finding \emph{some} Pareto optimal point is already a known problem, it corresponds to the worst case
iteration complexity of solving a single objective strongly convex optimization problem.

In the multi-objective optimization literature, e.g.,~\cite{marler2004survey}, scalarization is typically,
at most, a step in the process of finding the solution of a multi-objective problem, where the 
definition of solution depends on the context. In particular, it can be that the goal of the
optimization is 1) tracing the Pareto front itself, so in some sense finding all, or some adequate 
approximation to all, stationary points, or 2) find an appropriately \emph{best} point of the Pareto front 
through some secondary metrics, or using an interactive environment with a human participant who grades
potential solutions.

In this note, we shall concern ourselves with the first task: establish complexity bounds for some appropriate notion
of finding the entire Pareto front. To this end we define the problem as, for all $\lambda\in \Lambda \subset \mathbb{R}^m$, find
\begin{equation}\label{eq:Pareto}
\min_{x\in\mathbb{R}^n} \sum_{i=1}^m \lambda_i f_i(x),
\end{equation}
where $\Lambda $ is some finite grid of elements $\lambda$ satisfying $0<\lambda_i<1,\,\sum_i \lambda_i=1$. 
Given the constraint on the sum, we can consider $m-1$ dimensions as free 
which in turn entirely determine the remaining $\lambda_i$. We thus divide each side of the hypercube $[0,1]$ by some desired
width of the grid $d$, and thus there are $\left\lfloor{\frac{1}{d}}\right\rfloor^{m-1}$ total possible grid points,
where $\lfloor{a}\rfloor$ denotes the greatest integer less than or equal to $a$. Conversely, 
since they form a rectangular lattice, we can define the quantity $d$ denoting the maximum width of any two neighbors on the grid, that is $$d = \min_{(\lambda, \lambda')\in \Lambda~,~\lambda \ne \lambda'} \left(\max_{1\le i \le m }|\lambda_i-\lambda'_i|\right).$$

We organize this paper as follows. In section \ref{sec:Npathfoll}, we describe our algorithm to solve the multi objective strongly convex problem. We explain how to use a Newton path following procedure to find the entire Pareto front. 
Section \ref{sec:PNA} addresses the convergence of our algorithm by   characterizing its Complexity. A numerical illustration about the efficiency of our approach is given in section \ref{sec:Experiments}. 
 Conclusions  are given in section \ref{sec:Conclusion}.  

\section{Pathfollowing for finding the entire Pareto front.} \label{sec:Npathfoll}
%
%

Recall that for a fixed $\lambda^0 \in \Lambda$, using only first order information one can solve a strongly convex optimization problem of the type (\ref{eq:Pareto})
at best linearly (see for instance ~\cite[Theorem 3.18]{bubeck2015convex}, using a gradient descent based method). Namely, in order to obtain $\epsilon$ distance to the solution of  (\ref{eq:Pareto}) for a fixed $\lambda^0 \in \Lambda$, $\mathcal{O}(\log(1/\epsilon))$ iterations must be taken, with each iteration involving the computation of one gradient. As a result,
naively, one can obtain the entire Pareto front by solving each of the $\left\lfloor{\frac{1}{d}}\right\rfloor^{m-1}$ scalarized
problems defined across the grid points independently
with a gradient descent, to obtain an overall complexity of  $\mathcal{O}\left(\log(1/\epsilon)\left\lfloor{\frac{1}{d}}\right\rfloor^{m-1}\right)$.

In this note, we will propose finding the entire Pareto front by performing path-following using the implicit function theorem. Later we will show that the proposed strategy will reduce the overall iteration complexity drastically relative to naively solving every scalarized problem separately.
To start with, for $\lambda^{(0)} \in \Lambda$ we obtain the solution $x^{(0)} \in \mathbb{R}^n$ of the problem  (\ref{eq:Pareto}), (for instance,  by using a gradient descent method using the following stopping criterion  $\left\|\sum_{j=1}^m \lambda^{(0)}_j \nabla f_i(x)\right\| \le \epsilon$). Note that such point $x^{(0)}$ gives the Pareto optimal point of the problem (\ref{eq:moo}) associated to $\lambda^{(0)}$. Now, let $\lambda^{(1)}$ be one of the closest neighbors of $\lambda^{(0)} $ in the finite grid $\Lambda$, our goal is to apply a predictor-corrector scheme to deduce a new $x^{(1)}$ corresponding to 
the Pareto optimal solution of the problem (\ref{eq:moo}) associated with $\lambda^{(1)}$. 

Pathfollowing, or tracing a set of solutions for a parametrized nonlinear system of equations across a range of parameters, is an important
algorithmic tool, for which an introduction can be found in~\cite{allgower2003introduction}. Closest to our work,
a predictor-corrector pathfollowing procedure for strongly convex optimization problems (interpreted as strongly regular variational inequalities)
is given in~\cite{dontchev2013euler}. In this work it is shown that for this parametric problem a property of uniform strong regularity holds
and a procedure involving one tangential predictor (Euler) and one corrector (Newton) step result in a series of iterates whose
distance to a set of solutions to the parametric variational inequality is of the order of $d^4$,  
 where $d$ is, in this context, the grid spacing. Thus
there exists $C$ such that if $d\le C(\epsilon)^{1/4}$, a set of solutions with approximate optimality $\epsilon$ across a set of parameters
can be found. If applied to the multiobjective Pareto front context, the number of Euler-Newton continuation
steps is the number of grid points, which corresponds to $d^{-1}\ge C^{-1}\epsilon^{-1/4}$. If the desired grid is already small enough, then it is clear
that this pathfollowing procedure outperforms the naive method of solving the standalone problem at every grid point. Otherwise, it depends
on the magnitude of the desired number of additional grid points required to perform pathfollowing.

We consider an alternative predictor-corrector scheme that is more aggressive in its use of potentially longer tangential steps and 
multiple Newton iterations. In particular, this is more suitable for obtaining the set of solutions across the Pareto front with
the tightest iteration complexity bound. This predictor-corrector procedure will be repeated until we handle all the elements from $\Lambda$.
A formal description of the algorithm is given as Algorithm \ref{alg:PNA}.

\begin{algorithm}[t]
	\caption{A Newton path-following procedure for the entire Pareto front.}
	\label{alg:PNA}
	\begin{algorithmic}
	\STATE{\textbf{Input:}} Let $\Lambda=\{\lambda^{(0)},\ldots, \lambda^{(p-1)}\} \subset \real^m_{+}$ be some finite grid of $p$ elements satisfying, for all $j=0,\ldots,p-1$,  $\sum_i^m \lambda^{(j)}_i=1$, $\lambda^{({j+1})}$ is one of the closest neighbors not yet visited of $\lambda^{(j)}$.
	\STATE{\textbf{Output:}} The entire Pareto front by performing path-following associated with $\Lambda$: $x^{(0)}, x^{(1)}, \ldots, x^{(p-1)}$.
\vspace{3mm}
\STATE{\textbf{Compute an initial Pareto optimal point $x^{(0)}$}}, i.e., 
\begin{equation} \label{initial:pareto}
x^{(0)}= \arg \min_x f^{(0)}(x), \, \, \text{where }~~ f^{(0)}(x) = \sum_{i=1}^m \lambda_i^{(0)} f_i(x).
\end{equation}
Set $k=0$.
\STATE{\textbf{Step 1: Compute a predictor $\bar x^{(k+1)}$}}, i.e., 
\begin{equation}\label{predictor:step}
\bar x^{(k+1)} =x^{(k)}+  \left[\sum_{j=1}^m \lambda_j^{(k)} \nabla^2 f_j\left(x^{(k)}\right)\right]^{-1} \left(\sum_{i=1}^m \left(\lambda_i^{(k+1)}- \lambda_i^{(k)}\right) \nabla f_i\left(x^{(k)}\right)\right)
\end{equation}
\STATE{\textbf{Step 2: Apply a Newton correction to compute $x^{(k+1)}$}}, i.e.,  starting from $\bar x^{(k+1)}$ run the Newton method to solve 
\begin{equation}
x^{(k+1)} = \arg \min_x f^{(k)}(x), \, \, \text{where }~~f^{(k)}(x) = \sum_{i=1}^m \lambda_i^{(k+1)} f_i(x).
\end{equation}

\textbf{If} $k=p-1$ \textbf{then} Stop, \textbf{otherwise} increment $k$ by $1$ and go to  \textbf{Step 1}.
\end{algorithmic}
\end{algorithm}
%

The predictor step (Step 1 of Algorithm \ref{alg:PNA} ) is formulated using the implicit function theorem. Therefore, first, let's recall the implicit function theorem adapted to our context.
\begin{theorem}\label{th:implicitfun} Let $g: \mathbb{R}^{n+m} \rightarrow \mathbb{R}^n$ be a continuously differentiable function
for a parametrized system of equations,
\[
g(x,\lambda)=0, ~~~\mbox{where}~~x\in \mathbb{R}^n ~\mbox{and}~ \lambda\in \mathbb{R}^m.
\]
Consider that there exists a solution satisfying $g(x_0,\lambda_0)=0$. If the Jacobian matrix $J_{g,x}(x_0,\lambda_0)$ of $g$ with respect to $x$ is invertible, then there exists an open
neighborhood $\mathcal{B} \subset \mathbb{R}^m $ such that there exists a unique continuously differentiable path $\tilde x(\lambda)$ defined on $\lambda \in \mathcal{B}$ with $\tilde x(\lambda_0)=x_0$
and $g(\tilde x(\lambda),\lambda)=0$ for all $\lambda \in \mathcal{B}$. Furthermore, it holds that the derivative of $\tilde x(\lambda)$ over $\mathcal{B}$ is given by
\begin{equation}\label{eq:impfunder}
 \frac{\partial \tilde x}{\partial \lambda}(\lambda) = -\left[J_{g,x}(\tilde x(\lambda),\lambda)\right]^{-1} \frac{\partial g}{\partial \lambda}(\tilde x(\lambda),\lambda).
\end{equation}
\end{theorem}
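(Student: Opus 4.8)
The plan is to prove Theorem \ref{th:implicitfun} by the classical contraction-mapping (Banach fixed point) argument, which delivers existence, local uniqueness, and continuity of the path in one stroke, after which the derivative formula \eqref{eq:impfunder} follows by differentiating the identity $g(\tilde x(\lambda),\lambda)=0$. Write $J := J_{g,x}(x_0,\lambda_0)$, invertible by hypothesis, and define $\Phi(x,\lambda) := x - J^{-1} g(x,\lambda)$; then $\tilde x$ solves $g(\tilde x,\lambda)=0$ exactly when it is a fixed point of $\Phi(\cdot,\lambda)$.

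First I would show $\Phi(\cdot,\lambda)$ is a $\tfrac12$-contraction on a small closed ball $\bar B(x_0,r)$, uniformly for $\lambda$ near $\lambda_0$. Since $\partial_x\Phi(x_0,\lambda_0) = I - J^{-1} J_{g,x}(x_0,\lambda_0) = 0$ and $J_{g,x}$ is continuous, one can pick $r>0$ and a ball $\mathcal{B}$ around $\lambda_0$ with $\|\partial_x\Phi(x,\lambda)\|\le \tfrac12$ on $\bar B(x_0,r)\times\mathcal{B}$, which gives Lipschitz-in-$x$ with constant $\tfrac12$. Shrinking $\mathcal{B}$ if necessary, continuity of $g$ together with $g(x_0,\lambda_0)=0$ yields $\|J^{-1}g(x_0,\lambda)\|\le r/2$ for $\lambda\in\mathcal{B}$, hence $\|\Phi(x,\lambda)-x_0\|\le \tfrac12\|x-x_0\| + \|J^{-1}g(x_0,\lambda)\|\le r$, so $\Phi(\cdot,\lambda)$ maps $\bar B(x_0,r)$ into itself. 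The Banach fixed point theorem then produces, for each $\lambda\in\mathcal{B}$, a unique $\tilde x(\lambda)\in\bar B(x_0,r)$ with $g(\tilde x(\lambda),\lambda)=0$, and $\tilde x(\lambda_0)=x_0$ by uniqueness.

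Continuity of $\lambda\mapsto\tilde x(\lambda)$ comes from the fixed-point estimate $\|\tilde x(\lambda)-\tilde x(\lambda')\| \le \tfrac12\|\tilde x(\lambda)-\tilde x(\lambda')\| + \|J^{-1}\|\,\|g(\tilde x(\lambda'),\lambda)-g(\tilde x(\lambda'),\lambda')\|$, whence $\|\tilde x(\lambda)-\tilde x(\lambda')\|\le 2\|J^{-1}\|\,\|g(\tilde x(\lambda'),\lambda)-g(\tilde x(\lambda'),\lambda')\|\to 0$; since $g$ is $C^1$ this in fact gives a local Lipschitz bound $\|\tilde x(\lambda)-\tilde x(\lambda')\|\le L\|\lambda-\lambda'\|$ on a possibly smaller neighborhood. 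For differentiability I would further shrink $r$ and $\mathcal{B}$ so that $J_{g,x}(x,\lambda)$ stays invertible throughout (possible because invertibility is an open condition and $J_{g,x}$ is continuous), then first-order expand the identity $0 = g(\tilde x(\lambda),\lambda)-g(\tilde x(\lambda'),\lambda')$:
\[
0 = J_{g,x}(\tilde x(\lambda'),\lambda')\bigl(\tilde x(\lambda)-\tilde x(\lambda')\bigr) + \tfrac{\partial g}{\partial\lambda}(\tilde x(\lambda'),\lambda')(\lambda-\lambda') + o\bigl(\|\tilde x(\lambda)-\tilde x(\lambda')\|+\|\lambda-\lambda'\|\bigr).
\]
The Lipschitz bound turns the remainder into $o(\|\lambda-\lambda'\|)$; solving for $\tilde x(\lambda)-\tilde x(\lambda')$ and reading off the linear part as $\lambda\to\lambda'$ gives \eqref{eq:impfunder}, with continuity of the derivative inherited from that of $J_{g,x}$, $\partial g/\partial\lambda$, $\tilde x$, and matrix inversion.

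The main obstacle I anticipate is the differentiability step, precisely the conversion of the joint first-order remainder into an $o(\|\lambda-\lambda'\|)$ term, which is exactly where the previously established local Lipschitz continuity of $\tilde x$ is used; the existence, uniqueness, and continuity parts are a routine invocation of the contraction principle. One could alternatively deduce the whole statement from the inverse function theorem applied to $G(x,\lambda):=(g(x,\lambda),\lambda)$, but the direct contraction argument keeps the dependence on $\lambda$ and the derivative formula most transparent.
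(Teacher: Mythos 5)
Your proof is correct. Note that the paper does not actually prove this statement: it is presented as a recollection of the classical implicit function theorem, stated in the form needed for the path-following construction, and no argument is supplied. Your contraction-mapping proof is the standard textbook derivation of exactly this result, and all the steps check out: the map $\Phi(x,\lambda)=x-J^{-1}g(x,\lambda)$ is a uniform $\tfrac12$-contraction near $(x_0,\lambda_0)$ because $\partial_x\Phi(x_0,\lambda_0)=0$, the self-mapping and fixed-point steps are right, the a priori bound $\|\tilde x(\lambda)-\tilde x(\lambda')\|\le 2\|J^{-1}\|\,\|g(\tilde x(\lambda'),\lambda)-g(\tilde x(\lambda'),\lambda')\|$ correctly yields local Lipschitz continuity, and that Lipschitz bound is precisely what converts the joint Taylor remainder into $o(\|\lambda-\lambda'\|)$ so that \eqref{eq:impfunder} and continuity of the derivative follow. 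The only point worth a sentence more of care is the uniqueness clause as stated in the theorem (uniqueness of the \emph{path} through $x_0$, not merely of the fixed point in $\bar B(x_0,r)$): any continuous path $y(\lambda)$ with $y(\lambda_0)=x_0$ and $g(y(\lambda),\lambda)=0$ remains in $B(x_0,r)$ on a connected neighborhood of $\lambda_0$ by a standard open-closed argument, and hence coincides with $\tilde x$ there by your fixed-point uniqueness; this is routine but should be said explicitly to match the statement.
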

We consider applying Theorem \ref{th:implicitfun} to the optimality conditions of~\eqref{eq:scalarmoo} given by the following parametrized system of equations
\[
g(x,\lambda) = \sum_{i=1}^m \lambda_i \nabla f_i(x) = 0.
\]
Precisely, for a given iteration index $k$, consider that we have a solution $x^{(k)}$ at some $\lambda^{(k)} \in \Lambda$, i.e., 
\begin{equation}\label{epsilon:solution}
 \sum_{j=1}^m \lambda^{(k)}_j \nabla f_i(x^{(k)})=0.
\end{equation}
Since we consider strongly convex objectives, it holds that the matrix
\[
\sum_{i=1}^m \lambda^{(k)}_i \nabla^2 f_i(x)
\]
is invertible for all $x\in  \mathbb{R}^n$, in particular the inverse norm is bounded by the inverse of the weighted sum of the
strong convexity constants of the problem. Thus by Theorem~\ref{th:implicitfun} we have
that there exists a unique path $\tilde x^{(k)}(\lambda)$ for any choice of $\lambda \in \mathcal{B}_k$ such that $\tilde x^{(k)}(\lambda^{(k)})=x^{(k)}$ and 
$\sum_{i=1}^m \lambda_i \nabla f_i(\tilde x^{(k)}(\lambda)) = 0$, for some ball $\mathcal{B}^{(k)}$ around $\lambda^{(k)}$.
Furthermore, the derivative of the path given by~\eqref{eq:impfunder} is defined for all $\lambda \in \mathcal{B}^{(k)}$ to satisfy,
\begin{equation*}
 \frac{\partial \tilde x^{(k)}}{\partial \lambda}(\lambda) = -\left[\sum_{j=1}^m \lambda_j \nabla^2 f_j\left(\tilde x^{(k)} (\lambda)\right)\right]^{-1} \left[\nabla f_1\left(\tilde x^{(k)}(\lambda)\right), \ldots, \nabla f_m \left(\tilde x^{(k)}(\lambda)\right) \right].
\end{equation*}
Consider now a Taylor
expansion of $\tilde x^{(k)}(\lambda)$ for all $\lambda \in \mathcal{B}^{(k)}$, this is given by
\begin{equation}\label{eq:derivpath}
\tilde x^{(k)}(\lambda) = x^{(k)} -  \left[\sum_{j=1}^m \lambda^{(k)}_j \nabla^2 f_j\left((x^{(k)}\right)\right]^{-1}  \sum_{i=1}^m \left(\lambda_i-\lambda^{(k)}_i\right) 
\nabla f_i\left(x^{(k)}\right) +\mathcal{O}\left(\left\|\lambda-\lambda^{(k)}\right\|^2\right)
\end{equation}
Motivated by the discussion on Newton's method applied for path-following in~\cite[Chapter 5]{deuflhard2011newton}, for all $\lambda \in \mathcal{B}^{(k)}$, we define a predictor $\bar x^{(k)} (\lambda)$ by computing 
\begin{equation}\label{eq:derivpath2}
\bar x^{(k)} (\lambda) = x^{(k)} -  \left[\sum_{j=1}^m \lambda^{(k)}_j \nabla^2 f_j\left((x^{(k)}\right)\right]^{-1}  \sum_{i=1}^m \left(\lambda_i-\lambda^{(k)}_i\right) 
\nabla f_i\left(x^{(k)}\right) 
\end{equation}
which is precisely 
the ``tangent continuation method''  with the order $p=2$ as given in~\cite[Page 239]{deuflhard2011newton}. 
We define the remainder term as $\eta^{(k)}$. 



Assuming that $\lambda^{(k+1)}$ is close enough to $\lambda^{(k)}$ (i.e., $\lambda^{(k+1)} \in \mathcal{B}^{(k)}$), the predictor step $\bar x^{(k+1)}$ given by (\ref{predictor:step}) in Algorithm \ref{alg:PNA}  is defined as follows
$$
\bar x^{(k+1)}= \bar x^{(k)} \left(\lambda^{k+1}\right).
$$
There is a remaining algorithmic necessity before this becomes practical as the predictor $\bar x^{(k+1)}$ does not  necessarily satisfy the desired level of stationarity. 
To achieve a point closer to the actual solution, we  consider a ``corrector'' step $x^{(k+1)}$ by using a form of the
Newton step. For a particular level of error, the ordinary Newton method is quadratically convergent
towards the solution starting from the predicted point if the original point is sufficiently close. Thus we 
require that the predictor $\bar x^{(k+1)}$ is sufficiently accurate
and determine the size of the step $\lambda^{(k+1)}-\lambda^{(k)}$ appropriately.

\section{Characterizing the Complexity of Algorithm \ref{alg:PNA}} 
\label{sec:PNA}

Based on the ideas above, we can consider iteration complexity in a new sense. For a given iteration $k$,
consider having a point solution for a particular $\lambda^{(k)}$, up to an optimality tolerance
with a desired $\epsilon$. Then consider path-following from $\lambda^{(k)}$ to some $\lambda^{(k+1)}$
where $\lambda^{(k+1)} - \lambda^{(k)}$ is small enough (in terms of desired grid-spacing $d$) to be able to determine the associated solution on the Pareto front. The same procedure is repeated across
all the grid $\Lambda$ until all solutions of the Pareto front have been found.


Before developing our complexity analysis, we state formally our working assumptions on the objective function $f$.
\begin{assumption} \label{asm:1}
 The objective function $f: \mathbb{R}^n \rightarrow \mathbb{R}^m$ is twice continuously differentiable and strongly convex.
In particular, there exists two positive constant $c>0$ and $L>0$, for all $i \in \{1,\ldots,m \}$ and $y \in \real^n$, it holds that,
\begin{equation}\label{eq:asm:1}
c \|y\|^2 \le  y^{\top}\nabla^2 f_i(x)y \le L \|y\|^2
\end{equation}
for all $x \in \real^n$. In other words, for all $i \in \{1,\ldots,m \}$, the eigenvalues of the Hessian of $f_i$ are uniformly bounded from below
 by $c$, and above by $L$, everywhere.
\end{assumption}

This implies the following condition regarding scaling invariance properties appropriate for Newton methods~\cite{deuflhard2011newton}.
\begin{lemma}\label{prop:aff}
For all $\lambda \in \Lambda$, the mapping $x \to \sum_{i=1}^p \lambda_i \nabla^2 f_i(x)$ is affine covariant Lipschitz, i.e.,  meaning that there exists $\omega>0$ such that for all $x, y \in \real^n$, one has
\begin{equation}\label{eq:affinv}
\left\|\left(\sum_{j=1}^m \lambda_j \nabla^2 f_j(x)\right)^{-1} 
\left(\sum_{j=1}^m \lambda_j \nabla f_j(y)-\sum_{j=1}^m \lambda_j \nabla f_j(x)\right) \right\|
\le \omega \|x-y\|.
\end{equation}
\end{lemma}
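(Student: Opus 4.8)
The plan is to show that one may simply take $\omega = L/c$, uniformly over all $\lambda$ in the grid (indeed over all convex combinations of the $f_i$). Throughout, write $H_\lambda(x) := \sum_{j=1}^m \lambda_j \nabla^2 f_j(x)$ and $F_\lambda(x) := \sum_{j=1}^m \lambda_j \nabla f_j(x)$, and note that $H_\lambda$ is exactly the Jacobian of $F_\lambda$, so the quantity inside the norm in \eqref{eq:affinv} is $H_\lambda(x)^{-1}\bigl(F_\lambda(y)-F_\lambda(x)\bigr)$.

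First I would record the two elementary spectral bounds furnished by Assumption \ref{asm:1}. Since $\sum_j \lambda_j = 1$ with $\lambda_j>0$, for any $y\in\real^n$ we have
\[
y^\top H_\lambda(x) y \;=\; \sum_{j=1}^m \lambda_j\, y^\top \nabla^2 f_j(x) y,
\]
which by \eqref{eq:asm:1} lies between $c\|y\|^2$ and $L\|y\|^2$. Hence $H_\lambda(x)$ is symmetric positive definite with all eigenvalues in $[c,L]$; in particular $\|H_\lambda(x)^{-1}\|\le 1/c$ and $\|H_\lambda(x)\|\le L$, both uniformly in $x\in\real^n$ and in $\lambda$.

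Next I would control the increment $F_\lambda(y)-F_\lambda(x)$ by the fundamental theorem of calculus along the segment joining $x$ and $y$, which is legitimate because $f$ is twice continuously differentiable:
\[
F_\lambda(y)-F_\lambda(x) \;=\; \left(\int_0^1 H_\lambda\bigl(x+t(y-x)\bigr)\,dt\right)(y-x).
\]
Taking norms and using $\|H_\lambda(\cdot)\|\le L$ from the previous step gives $\|F_\lambda(y)-F_\lambda(x)\|\le L\|x-y\|$. Finally, submultiplicativity of the operator norm applied to the product in \eqref{eq:affinv} yields
\[
\bigl\|H_\lambda(x)^{-1}\bigl(F_\lambda(y)-F_\lambda(x)\bigr)\bigr\|
\;\le\; \|H_\lambda(x)^{-1}\|\,\|F_\lambda(y)-F_\lambda(x)\|
\;\le\; \frac{L}{c}\,\|x-y\|,
\]
so \eqref{eq:affinv} holds with $\omega=L/c$.

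I do not expect any genuine obstacle here: the only point needing a little care is the integral (mean-value) representation of $F_\lambda(y)-F_\lambda(x)$ for the vector-valued map $F_\lambda$, justified by the $C^2$ hypothesis, together with the observation that every bound above is independent of $\lambda$, so that a single constant $\omega$ serves for the entire grid $\Lambda$.
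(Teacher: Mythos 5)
Your proof is correct and matches the paper's intended argument: the paper's own proof is the single line ``Follows from Assumption~\ref{asm:1} with $\omega=\frac{L}{c}$,'' and your derivation (eigenvalues of the weighted Hessian in $[c,L]$, hence $\|H_\lambda(x)^{-1}\|\le 1/c$, combined with the integral mean-value bound $\|F_\lambda(y)-F_\lambda(x)\|\le L\|x-y\|$) is precisely the computation that line leaves implicit. Nothing further is needed.
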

\begin{proof}
Follows from Assumption~\ref{asm:1} with $\omega=\frac{L}{c}$.
\end{proof}

\begin{lemma}\label{lem:boundeta}
Consider Assumption~\ref{asm:1}. There exists some $\eta$ depending only on properties of $\{f_k(\cdot)\}$ such that $\eta^{(k)}\le \eta\|\lambda^{(k+1)}-\lambda^{(k)}\|$ for all $k$.
\end{lemma}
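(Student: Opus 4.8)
The plan is to read off $\eta^{(k)}$ as the gap between the true implicit path $\tilde x^{(k)}(\cdot)$ evaluated at $\lambda^{(k+1)}$ and its first‑order tangent approximation $\bar x^{(k)}(\lambda^{(k+1)})$ from~\eqref{eq:derivpath2}, and then to bound that gap by a fixed constant times the step length $\|\lambda^{(k+1)}-\lambda^{(k)}\|$. Since Assumption~\ref{asm:1} only gives $f\in C^2$, the path $\tilde x^{(k)}$ is merely $C^1$ (its derivative formula involves the Hessians $\nabla^2 f_j$, which are only continuous), so I will not attempt to extract a quadratic remainder; a linear bound with a grid‑independent constant is exactly what can be shown, and it is all the lemma asks for.

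First I would use that $\tilde x^{(k)}$ is $C^1$ on $\mathcal B^{(k)}$ and that $\lambda^{(k+1)}\in\mathcal B^{(k)}$ (the ``close enough'' hypothesis already invoked in Section~\ref{sec:Npathfoll}), together with the fundamental theorem of calculus along the segment $[\lambda^{(k)},\lambda^{(k+1)}]$, to write
\[
\tilde x^{(k)}(\lambda^{(k+1)}) - x^{(k)} = \int_0^1 \frac{\partial \tilde x^{(k)}}{\partial \lambda}\!\left(\lambda^{(k)}+t(\lambda^{(k+1)}-\lambda^{(k)})\right)(\lambda^{(k+1)}-\lambda^{(k)})\,dt .
\]
Subtracting the tangent term $\frac{\partial \tilde x^{(k)}}{\partial\lambda}(\lambda^{(k)})(\lambda^{(k+1)}-\lambda^{(k)})$, which by~\eqref{eq:derivpath2} is exactly $\bar x^{(k)}(\lambda^{(k+1)})-x^{(k)}$, and taking norms, the triangle inequality yields
\[
\eta^{(k)} \;\le\; 2\Bigl(\sup_{\lambda\in[\lambda^{(k)},\lambda^{(k+1)}]}\Bigl\|\tfrac{\partial \tilde x^{(k)}}{\partial\lambda}(\lambda)\Bigr\|\Bigr)\,\bigl\|\lambda^{(k+1)}-\lambda^{(k)}\bigr\| .
\]

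It then remains to bound the path derivative uniformly in $k$ and in $\lambda$ on the segment. Using its closed form, two ingredients suffice: (i) by Assumption~\ref{asm:1} and $\sum_j\lambda_j=1$, the matrix $\sum_j\lambda_j\nabla^2 f_j(\tilde x^{(k)}(\lambda))$ has smallest eigenvalue at least $c$, so its inverse has spectral norm at most $1/c$; and (ii) the gradients $\nabla f_i(\tilde x^{(k)}(\lambda))$ are uniformly bounded. For (ii) I would note that the segment $[\lambda^{(k)},\lambda^{(k+1)}]$ lies in the simplex, so on it the path $\tilde x^{(k)}(\lambda)$ coincides with the minimizer $x^*(\lambda):=\arg\min_x\sum_i\lambda_i f_i(x)$ (strong convexity makes the stationarity condition $\sum_i\lambda_i\nabla f_i=0$ necessary and sufficient); the map $\lambda\mapsto x^*(\lambda)$ is continuous on the compact simplex, hence has compact image, on which each $\|\nabla f_i\|$ attains a finite maximum $G$. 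Combining (i)--(ii) gives $\|\partial\tilde x^{(k)}/\partial\lambda\|\le \sqrt m\,G/c$, so the lemma follows with $\eta:=2\sqrt m\,G/c$, a quantity depending only on $\{f_i\}$ and $m$.

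The main obstacle is point (ii): making the constant genuinely grid‑independent rather than a finite maximum over the $p$ visited steps. The clean resolution is the observation above that every point visited by any of the partial paths is a Pareto‑optimal point of~\eqref{eq:moo}, and the Pareto set of a strongly convex problem is compact; once this is in place, the remaining estimates are routine uses of Assumption~\ref{asm:1}. A minor technical check is that the segment joining two consecutive grid points stays inside the neighbourhood $\mathcal B^{(k)}$ where the implicit function theorem applies, which holds for the grid spacing $d$ small enough, as assumed throughout.
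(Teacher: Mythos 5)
Your proof is correct, and it reaches the bound by a genuinely more elementary route than the paper. Both arguments ultimately rest on the same decomposition: the remainder $\eta^{(k)}$ is the norm of (path increment) minus (tangent term), and each piece is bounded by a constant times $\|\lambda^{(k+1)}-\lambda^{(k)}\|$. The paper obtains the bound on the path increment by invoking strong regularity in the sense of Robinson and citing perturbation-theoretic results (Dontchev--Krastanov, Bonnans--Shapiro) to get global Lipschitz continuity of $\tilde x(\cdot)$, and it writes a second-order Taylor expansion of the path with a Lagrange remainder involving $\partial^2\tilde x/\partial\lambda^2$ --- a quantity whose existence is not actually guaranteed when $f$ is only $C^2$, so the path is only $C^1$; the final estimate, however, only uses the triangle inequality and a (left implicit) finite $\sup_\lambda\|\partial\tilde x/\partial\lambda\|$. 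You avoid both issues: the fundamental theorem of calculus applied to the $C^1$ path replaces the Taylor expansion, and you make the constant explicit and manifestly grid-independent, $\eta=2\sqrt m\,G/c$, via the closed form of the path derivative, the uniform lower Hessian bound $c$, and compactness of the image of the simplex under the minimizer map (i.e., of the Pareto set). What the paper's route buys is a framework (strong regularity) that would survive the addition of constraints, where the argmin map is no longer given by an unconstrained stationarity equation; what your route buys is self-containedness, an explicit constant, and slightly greater rigor under the stated smoothness assumptions. Your closing caveats (that the segment between consecutive grid points must lie in the domain of the implicit path, which holds since the path extends over the whole simplex by global uniqueness of minimizers) are exactly the right things to check.
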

\begin{proof}
Assumption~\ref{asm:1} which is equivalent to strong regularity~\cite{robinson1980strongly} implies Lipschitz continuity of the mapping $\tilde x(\lambda)$ introduced in Theorem~\ref{th:implicitfun}  for all $\lambda^{(k)}$ by~\cite[Theorem 3.2]{dontchev2013euler}.

Since strong regularity holds, we can apply~\cite[Theorem 5.60]{bonnans2013perturbation}, noting that the problem is unconstrained and 
in that notation $G(x)\equiv 0$ and we let the perturbation $u(t)=\tilde\lambda^{(k)}(t)$ be $\tilde\lambda^{(k)}(t)=t\lambda^{(k+1)}+(1-t)\lambda^{(k)}$.
The application of the Theorem implies that for $\|\lambda^{(k+1)}-\lambda^{(k)}\|$ sufficiently small it holds that there is a unique
$\tilde x(\tilde\lambda^{(k)}(t))$ that is continuously differentiable. 
Since uniform regularity implies solution uniqueness, we can extend $\tilde x(\lambda)$ 
as needed across all $t\lambda^{(k+1)}+(1-t)\lambda^{(k)}$ for $t\in[0,1]$.
And since the grid is compact and again by uniform regularity, 
and there is a unique solution for all $\lambda^{(k)}$, it holds that this applies for all solution paths across the grid and $\tilde x(\lambda)$ is 
globally Lipschitz continuous.

Now we have shown there exists some $C_L$,
\[
\|\tilde x(\lambda^{(k+1)})-\tilde x(\lambda^{(k)})\| \le C_L \|\lambda^{(k+1)}-\lambda^{(k)}\|
\]
and since,
\[
\begin{array}{l}
\tilde x(\lambda^{(k+1)}) = \tilde x(\lambda^{(k)})+\frac{\partial \tilde x^{(k)}}{\partial \lambda}(\lambda^{(k)})^T (\lambda^{(k+1)}-\lambda^{(k)})\\ \qquad\qquad\qquad+
\frac{1}{2}(\lambda^{(k+1)}-\lambda^{(k)})^T\frac{\partial^2 \tilde x^{(k)}}{\partial \lambda^2}(\lambda^{(k)}+t\xi^{(k)} (\lambda^{(k+1)}-\lambda^{(k)}))(\lambda^{(k+1)}-\lambda^{(k)})
\end{array}\]
We have that,
\[
\begin{array}{l}
\eta^{(k)}= \left\|\frac{1}{2}(\lambda^{(k+1)}-\lambda^{(k)})^T\frac{\partial^2 \tilde x^{(k)}}{\partial \lambda^2}(\lambda^{(k)}+t\xi^{(k)} (\lambda^{(k+1)}-\lambda^{(k)}))(\lambda^{(k+1)}-\lambda^{(k)})\right\|
\\ \qquad\le \left\|\tilde x(\lambda^{(k+1)}) - \tilde x(\lambda^{(k)})\right\| + \left\|\frac{\partial \tilde x^{(k)}}{\partial \lambda}(\lambda^{(k)})^T (\lambda^{(k+1)}-\lambda^{(k)})\right\|
\\ \qquad \le C_L \|\lambda^{(k+1)}-\lambda^{(k)}\|+\left\|\sup_\lambda \frac{\partial \tilde x^{(k)}}{\partial \lambda}(\lambda)\right\|\|\lambda^{(k+1)}-\lambda^{(k)}\|:= \eta \|\lambda^{(k+1)}-\lambda^{(k)}\|
\end{array}
\]
\end{proof}

The next result shows that applying  the correction step  will require a number of iterations of the ordinary Newton method of order $\log\log\left(\frac{1}{\epsilon}\right)$ to get an  $\epsilon$-Pareto optimal solution.
\begin{lemma} \label{lm:2}
Let Assumption \ref{asm:1} hold. For a given iteration index $k$, consider $\lambda^{(k)} \in \Lambda$ and $x^{(k)}$ such that
\begin{equation*}\label{epsilon:solution}
\left \| \sum_{j=1}^m \lambda^{(k)}_j \nabla f_i\left(x^{(k)}\right) \right\| \le \epsilon.
\end{equation*}
Let  $\lambda^{(k+1)} \in \Lambda$ such that  
\begin{equation}\label{eq:pcbound}
\left\|\lambda^{(k+1)} - \lambda^{(k)}\right\|_\infty\le \frac{2}{\omega\eta},
\end{equation}
where $\eta$ is as in Lemma~\ref{lem:boundeta} then, the ordinary Newton method with
the starting point $\bar x^{(k+1)}$ (as given by (\ref{predictor:step})) converges 
and the computational cost of achieving a solution point  $x^{(k+1)}$ such that 
\begin{equation*}
\left \| \sum_{j=1}^m \lambda^{(k+1)}_j \nabla f_i\left(x^{(k+1)}\right) \right\| \le \epsilon.
\end{equation*} is of order
$\log\log\left(\frac{1}{\epsilon}\right)$.
\end{lemma}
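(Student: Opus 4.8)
The plan is to recognise the corrector loop as the ordinary Newton iteration for the nonlinear system $F(x) := \sum_{i=1}^m \lambda_i^{(k+1)}\nabla f_i(x) = 0$, whose Jacobian $F'(x) = \sum_{i=1}^m \lambda_i^{(k+1)}\nabla^2 f_i(x)$ is, by Assumption~\ref{asm:1}, invertible everywhere with $\|F'(x)^{-1}\|\le 1/c$ and $\|F'(x)\|\le L$, and then to invoke the affine covariant local convergence theory for Newton's method (the local Newton--Mysovskikh theorem, \cite[Chapter 2]{deuflhard2011newton}). The Lipschitz-type regularity required is exactly what Lemma~\ref{prop:aff} supplies, applied with the weights $\lambda^{(k+1)}$: namely the affine covariant Lipschitz condition~\eqref{eq:affinv} with constant $\omega = L/c$. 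Let $x^\star$ denote the unique minimiser of $f^{(k)}$, equivalently the unique zero of $F$; it coincides with the endpoint $\tilde x^{(k)}(\lambda^{(k+1)})$ of the path of Theorem~\ref{th:implicitfun} up to an $O(\epsilon)$ perturbation caused only by $x^{(k)}$ being $\epsilon$-stationary rather than exact, a term that is dominated throughout.

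First I would estimate how far the predictor is from $x^\star$. By construction~\eqref{eq:derivpath2}, $\bar x^{(k+1)}$ is the first-order truncation of $\tilde x^{(k)}$ at $\lambda^{(k+1)}$, so $\|\bar x^{(k+1)} - \tilde x^{(k)}(\lambda^{(k+1)})\| = \eta^{(k)}$ and therefore $\|\bar x^{(k+1)} - x^\star\| \le \eta^{(k)} + O(\epsilon)$. Lemma~\ref{lem:boundeta} bounds $\eta^{(k)} \le \eta\,\|\lambda^{(k+1)}-\lambda^{(k)}\|$, and the step-size restriction~\eqref{eq:pcbound} is calibrated precisely so that, after accounting for the fixed norm-equivalence factor relating $\|\cdot\|$ to $\|\cdot\|_\infty$ (legitimate since consecutive grid points differ in a bounded number of coordinates), one gets $\omega\,\|\bar x^{(k+1)} - x^\star\| < 2$. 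This puts the predictor strictly inside the ball of guaranteed quadratic convergence, so plain Newton started from $\bar x^{(k+1)}$ is well defined and converges to $x^\star$.

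Next I would telescope the quadratic contraction in the usual way. Writing $x^{[0]} = \bar x^{(k+1)}$ and letting $x^{[j+1]}$ be the Newton iterate obtained from $x^{[j]}$, the affine covariant estimate gives $\|x^{[j+1]} - x^\star\| \le \tfrac{\omega}{2}\,\|x^{[j]} - x^\star\|^2$, hence $\tfrac{\omega}{2}\|x^{[j]} - x^\star\| \le \theta^{2^j}$ with $\theta := \tfrac{\omega}{2}\|x^{[0]} - x^\star\| < 1$. Since $F(x^\star) = 0$ and $F$ is $L$-Lipschitz, $\|F(x^{[j]})\| \le L\,\|x^{[j]} - x^\star\| \le \tfrac{2L}{\omega}\,\theta^{2^j}$, so the stopping criterion $\|F(x^{[j]})\|\le\epsilon$ is met once $2^j \ge \log\!\big(2L/(\omega\epsilon)\big)/\log(1/\theta)$, i.e. after $j = O\!\big(\log\log(1/\epsilon)\big)$ Newton steps. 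Each step costs a fixed amount of work (one gradient, one Hessian, one linear solve), independent of $\epsilon$, so the total cost of Step~2 at iteration $k$ is $O\!\big(\log\log(1/\epsilon)\big)$.

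The single delicate point is the second paragraph: certifying that the predictor genuinely lands in the quadratic-convergence region. This is where the constant $2/(\omega\eta)$ in~\eqref{eq:pcbound} is spent --- it requires reconciling the norm of Lemma~\ref{lem:boundeta} with the $\infty$-norm in~\eqref{eq:pcbound} and, most importantly, verifying that the admissible step length remains bounded below by a constant that does \emph{not} shrink as $\epsilon\to 0$, which is exactly what keeps the outer sweep over the $p$ grid points inexpensive. Everything past that is the textbook $\log\log$ bookkeeping for a quadratically convergent iteration together with the elementary conversion from distance-to-solution to residual norm.
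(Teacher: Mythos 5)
Your proposal follows essentially the same route as the paper's proof: bound the predictor's distance to the exact solution by $\eta^{(k)}\le\eta\|\lambda^{(k+1)}-\lambda^{(k)}\|\le 2/\omega$ using Lemma~\ref{lem:boundeta} and~\eqref{eq:pcbound}, invoke the affine covariant quadratic convergence of the ordinary Newton method with constant $\omega$ from Lemma~\ref{prop:aff}, and convert distance-to-solution into a residual bound via the Lipschitz constant $L$ to get the $\log\log(1/\epsilon)$ count. If anything, you are more careful than the paper on the one delicate point you flag: the paper's own telescoped bound comes out as $2^{2^j}$ (which does not contract, since $\omega\cdot\tfrac{2}{\omega}=2$ sits on the boundary of the convergence region rather than strictly inside it), and the paper silently ignores the $O(\epsilon)$ perturbation from $x^{(k)}$ being only approximately stationary, both of which you correctly identify as needing a strict inequality $\theta<1$ and a dominated error term.
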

\begin{proof}
From (\ref{eq:pcbound}) and~\cite[Theorem 5.2]{deuflhard2011newton}, one conclude the ordinary Newton method with
the starting point $\bar x^{(k+1)}$  converges towards a solution point  $\left[\bar x^{(k+1)}\right]_{\infty}$ such that \\ $\sum_{j=1}^m \lambda^{(k+1)}_j \nabla f_i\left(\left[\bar x^{(k+1)}\right]_{\infty}\right)=0$.

Let  $\left[\bar x^{(k+1)}\right]_j$ be the $j^{th}$ iterate produced by an ordinary Newton method with
the starting point $\bar x^{(k+1)}$. 
We note that by the definition of $\eta^{(k)}$ and using (\ref{eq:pcbound}), one has
\begin{equation}\label{eq:newtconv}
\left\|\left[\bar x^{(k+1)}\right]_0- \left[\bar x^{(k+1)}\right]_{\infty}\right\|\le \eta^{(k)}\le \eta \|\lambda^{(k+1)}-\lambda^{(k)}\| \le \frac{2}{\omega}. 
\end{equation}
In this case, using~\cite[Theorem 2.3]{deuflhard2011newton},  ordinary Newton method with
the starting point $\bar x^{(k+1)}$ converges quadratically, i.e., for each iteration $j$ of the ordinary Newton method, on has
\begin{equation*}\label{eq:newtconvb}
\left\|\left[\bar x^{(k+1)}\right]_{j}- \left[\bar x^{(k+1)}\right]_{\infty}\right\|\le \omega^{2^{j}} \left\|\left[\bar x^{(k+1)}\right]_0- \left[\bar x^{(k+1)}\right]_{\infty}\right\|^{2^j}.
\end{equation*}
Hence, using (\ref{eq:newtconv}), one deduces that 
\begin{equation*}\label{eq:newtconvb}
\left\|\left[\bar x^{(k+1)}\right]_{j}- \left[\bar x^{(k+1)}\right]_{\infty}\right\|\le 2^{2^{j}}.
\end{equation*}
Thus, 
\begin{eqnarray*}
\left \| \sum_{j=1}^m \lambda^{(k)}_j \nabla f_i\left(\left[\bar x^{(k+1)}\right]_{j}\right) \right\| &= & \left \| \sum_{j=1}^m \lambda^{(k)}_j \nabla f_i\left(\left[\bar x^{(k+1)}\right]_{j}\right)- \sum_{j=1}^m \lambda^{(k)}_j \nabla f_i\left(\left[\bar x^{(k+1)}\right]_{\infty}\right) \right \| \\
&\le&  L  \left  \|\left[\bar x^{(k+1)}\right]_{j}- \left[\bar x^{(k+1)}\right]_{\infty} \right \|\le L 2^{2^{j}}.
\end{eqnarray*}
This implies that the computational cost of achieving the desired level of stationarity is 
$O\left(\log\log\left(\frac{1}{\epsilon}\right)\right)$.
\end{proof}

%

Thus the complexity of Algorithm \ref{alg:PNA} is just of the order of complexity for solving a standalone strongly convex problem 
(i.e., computing $x^{(0)}$) added with
$\frac{1}{d^{m-1}}$ multiplied by the cost of a predictor and a Newton step. We formalize this with the following theorem,

\begin{theorem} 
Let Assumption \ref{asm:1} hold.
Define $N_{\epsilon}$ 
 to be the number of iterations required to obtain $x^{(0)}$, 
a point that has distance at most $\epsilon$ from the
optimal point corresponding to~\eqref{eq:scalarmoo} at $\lambda^{(0)}$.  Assume that the maximum width between any two neighbors on the grid $\Lambda$ is,
\begin{equation}\label{cond:d}
d \le \min \left(\frac{2}{\omega\eta} , \bar d \right),
\end{equation}
with $\bar d$ the minimal desired distance between lattice points.

Then,  the overall iteration complexity of Algorithm \ref{alg:PNA} is  $$N_{\epsilon}+\mathcal{O}\left( \left\lfloor{\frac{1}{d}}\right\rfloor^{m-1}\log\log\left(\frac{1}{\epsilon}\right) \right).$$
\end{theorem}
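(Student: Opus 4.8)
The plan is to split the work performed by Algorithm~\ref{alg:PNA} into two contributions: the cost $N_{\epsilon}$ of the initialization that produces $x^{(0)}$ in~\eqref{initial:pareto}, and the cost of the $p-1$ predictor--corrector sweeps that follow, one per remaining grid point. First I would recall, as noted earlier for a rectangular lattice of spacing $d$, that the number of grid points is $p=\lfloor 1/d\rfloor^{m-1}$, so the loop consisting of Step~1 and Step~2 is executed exactly $p-1=\lfloor 1/d\rfloor^{m-1}-1$ times. The claimed bound then reduces to showing that each such iteration costs $\mathcal{O}\!\left(\log\log(1/\epsilon)\right)$ and that the procedure is well defined along the whole traversal.

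Next I would bound the cost of a single iteration $k\to k+1$. The predictor step~\eqref{predictor:step} requires only the assembly of the weighted Hessian $\sum_j \lambda_j^{(k)}\nabla^2 f_j(x^{(k)})$, $m$ gradient evaluations, and one linear solve; this is $\mathcal{O}(1)$ work, independent of $\epsilon$ and $d$, and is absorbed into the constant. For the corrector I would invoke Lemma~\ref{lm:2}. Its hypothesis~\eqref{eq:pcbound} is met because, by construction of the grid traversal, $\lambda^{(k+1)}$ is a closest neighbour of $\lambda^{(k)}$, hence $\|\lambda^{(k+1)}-\lambda^{(k)}\|_{\infty}\le d$, and condition~\eqref{cond:d} gives $d\le 2/(\omega\eta)$; moreover $x^{(k)}$ is, by the inductive hypothesis below, an $\epsilon$-stationary point at $\lambda^{(k)}$. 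Lemma~\ref{lm:2} then applies verbatim and yields that the ordinary Newton method started at $\bar x^{(k+1)}$ reaches a point $x^{(k+1)}$ with $\|\sum_j \lambda_j^{(k+1)}\nabla f_j(x^{(k+1)})\|\le \epsilon$ in $\mathcal{O}\!\left(\log\log(1/\epsilon)\right)$ Newton iterations.

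It remains to chain these estimates by induction on $k$. The base case is $x^{(0)}$: after $N_{\epsilon}$ iterations it lies within distance $\epsilon$ of the optimizer of~\eqref{eq:scalarmoo} at $\lambda^{(0)}$, and by the $L$-Lipschitz continuity of each $\nabla f_i$ (Assumption~\ref{asm:1}) this gives $\|\sum_j \lambda_j^{(0)}\nabla f_j(x^{(0)})\|\le L\epsilon$, which only alters the constants inside the double logarithm. For the inductive step, if $x^{(k)}$ is $\epsilon$-stationary at $\lambda^{(k)}$ then the previous paragraph produces an $\epsilon$-stationary $x^{(k+1)}$ at $\lambda^{(k+1)}$ at cost $\mathcal{O}\!\left(\log\log(1/\epsilon)\right)$. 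Summing over $k=0,\dots,p-2$ gives a total loop cost $(p-1)\cdot\mathcal{O}\!\left(\log\log(1/\epsilon)\right)=\mathcal{O}\!\left(\lfloor 1/d\rfloor^{m-1}\log\log(1/\epsilon)\right)$, and adding $N_{\epsilon}$ yields the stated complexity.

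The main obstacle, and the point I would argue most carefully, is justifying that the inductive step is legitimate at \emph{every} grid point simultaneously: that the ball $\mathcal{B}^{(k)}$ of validity of Theorem~\ref{th:implicitfun}, the affine-covariance constant $\omega$ of Lemma~\ref{prop:aff}, and the remainder constant $\eta$ of Lemma~\ref{lem:boundeta} can all be chosen uniformly in $k$. This is precisely what the uniform strong regularity following from Assumption~\ref{asm:1} provides (via the explicit bounds $\omega=L/c$ and the compactness argument in the proof of Lemma~\ref{lem:boundeta}), so a single $d$ satisfying~\eqref{cond:d} works for the whole path; the only remaining bookkeeping is the equivalence between the $\infty$-norm appearing in~\eqref{eq:pcbound} and the Euclidean norm used in Lemma~\ref{lem:boundeta}, which at worst inserts a dimension-dependent constant into the threshold on $d$ and does not change the form of the final bound.
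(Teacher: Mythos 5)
Your proposal is correct and follows essentially the same route as the paper's proof: decompose the cost into the initialization $N_{\epsilon}$ plus, for each of the at most $\lfloor 1/d\rfloor^{m-1}$ grid points, a constant-cost predictor and a corrector whose $\mathcal{O}(\log\log(1/\epsilon))$ cost is supplied by Lemma~\ref{lm:2} under condition~\eqref{cond:d}. The explicit induction on the $\epsilon$-stationarity of $x^{(k)}$ and the remarks on the uniformity of $\omega$ and $\eta$ across the grid are details the paper leaves implicit, and they strengthen rather than alter the argument.
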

\begin{proof} 
First, note that, for each iteration $k$ of  Algorithm \ref{alg:PNA}, 
the complexity of the predictor step is constant as its computational cost does not depend on $\epsilon$.
For the corrector Newton step, since one has 
\begin{equation*}
\|\lambda^{(k+1)} - \lambda^{(k)}\|_\infty\le d \le \frac{2}{\omega\eta},
\end{equation*}
Lemma \ref{lm:2} implies that  complexity of such step is $O\left(\log\log\left(\frac{1}{\epsilon}\right)\right)$. The proof is thus completed  since the total number of lattice points in the grid $\Lambda$ is at most $\left\lfloor{\frac{1}{d}}\right\rfloor^{m-1}$.
\end{proof}

Note that by using a gradient solver, the first term $N_{\epsilon}$ is of order $\log(1/\epsilon)$. 
Hence, one can see that the complexity is generally favorable compared to the naive method of solving the strongly convex problem
at every grid point separately, as $\log\log\left(\frac{1}{\epsilon}\right) \ll  \log(1/\epsilon)$ for small $\epsilon$.

\begin{remark}
Note that both the naive method of solving every problem across the grid points and path-following are both
about equally parallelizeable with perfect speedup as long as the number of grid points is larger
than the number of processors.
We  can split the grid into disjoint components, and each processor finds one point in its part of the convex hull of allowable $\{\lambda_i\}$ and proceeds to pathfollow across the grid component assigned to it.
\end{remark}


\section{Numerical Illustration}
\label{sec:Experiments}

To show the numerical efficiency of our approach compared to the naive method (which corresponds to the Gradient Descent method applied sequentially to the set of problems  $(\ref{eq:scalarmoo})$ by varying $\lambda$), we will show the potential of the proposed approach on a very simple problem \cite{Huband-2006}, defined by 
$$f(x) = \left[(x_1-1)^2+(x_1-x_2)^2, (x_2-3)^2+(x_1-x_2)^2\right]^\top$$
Since we have two objective functions, the vector $\lambda$ has two components $\lambda_1$ and $\lambda_2$ where $\lambda_1 + \lambda_2 = 1$. In our experiment, we descritize $\lambda$ in a uniform grid with  a grid stepsize $d$ (the desired distance between the lattice points). 

In our Matlab illustration, we will call \textbf{Multi-GD} the naive method and \textbf{GD+Newton Pathfollowing} the implementation of our Algorithm \ref{alg:PNA} (where we used Gradient Descent method to find the first Pareto optimal point and then apply the Newton path-following procedure). For the Gradient Descent method, we used a random initial point $x_0$ and a stepsize equal to $1/\lambda_{\max}$ where $\lambda_{\max}$ is the maximum eigenvalue for the Hessians of $f_1$ and $f_2$.  We stopped the methods when the norm of the gradient is less than $10^{-7}$.

The obtained results are depicted in Figure \ref{fig:pareto}, one can see that both methods are able to find a similar Pareto Front (independently of the value of the grid stepsize $d$). In term of the elapsed CPU time to find the front, our proposed algorithm is shown to be faster than the naive method. In particular, one can see that,for different values of $d$, the method \textbf{GD+Newton Pathfollowing} runs $10$ times faster than the \textbf{Multi-GD} method.  We conducted other experiments (not reported here) on many toy problems, and in all the experiments, in term of running time our method was outperforming the naive method, while finding essentially the same front.
%

\begin{figure}[!h]
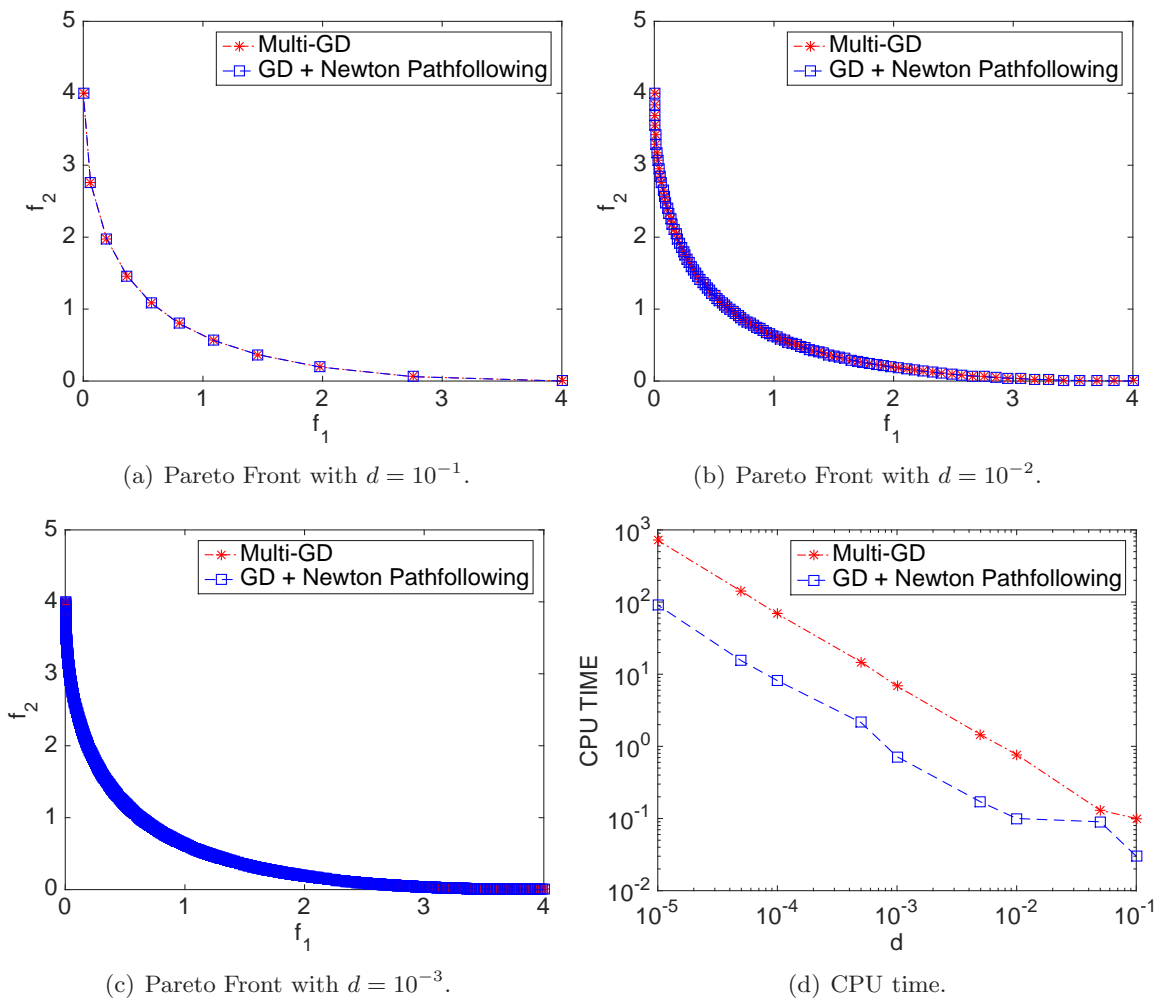

\centering
\subfigure[Pareto Front with $d=10^{-1}$.]{
\includegraphics[scale=0.4]{./front_pareto_d_01.eps}
}
\subfigure[Pareto Front with $d=10^{-2}$.]{
\includegraphics[scale=0.4]{./front_pareto_d_001.eps}
}

\subfigure[Pareto Front with $d=10^{-3}$.]{
\includegraphics[scale=0.4]{./front_pareto_d_0001.eps}
}
\subfigure[CPU time.]{
\includegraphics[scale=0.4]{./front_pareto_cpu_time.eps}
}
\caption{Pareto Front and CPU time comparison, using \textbf{Multi-GD} and \textbf{GD+ Newton Path-following}, for different values of $d$.}
\label{fig:pareto}
\end{figure}

\section{Conclusion}
\label{sec:Conclusion}

In this note we studied the complexity of a class of strongly convex multi objective optimization problems. We observed that such a problem
is not uniquely defined, given different criteria of what it means to solve multiobjective optimization. 
Picking the most context-independent
criterion -- finding the set of all Pareto optimal points on a front, we demonstrated that in most cases, finding
the solution of one scalarized problem and path-following across the grid to obtain the others is superior to
finding the solution of every problem independently.

\bibliographystyle{plain}
\bibliography{refs}

\end{document}